\documentclass[11pt, a4paper, reqno]{amsart}

\usepackage{amsfonts,amsmath,amsthm}
\usepackage{amssymb}
\usepackage{latexsym}

\usepackage[english]{babel}

\numberwithin{equation}{section}

\newtheorem{theorem}{Theorem}[section]
\newtheorem{proposition}[theorem]{Proposition}
\newtheorem{lemma}[theorem]{Lemma}
\newtheorem{corollary}[theorem]{Corollary}
\newtheorem{cor}[theorem]{Corollary}

\theoremstyle{definition}

\newtheorem{definition}[theorem]{Definition}
\newtheorem{example}[theorem]{Example}

\theoremstyle{remark}

\newtheorem{remark}[theorem]{Remark}

\newcommand{\eps}{\varepsilon}

\newcommand{\Id}{\mathrm{Id}}

\newcommand{\dopu}{{:}\allowbreak\ }

\newcommand{\rest}[2]{#1\raisebox{-0.3ex}{\mbox{$\mid_{#2}$}}}

\newcommand{\loglike}[1]{\mathop{\rm #1}\nolimits}

\newcommand{\sign}{\loglike{sign}}

\arraycolsep 2pt

%
%

\newcounter{abc}   
\newcounter{iiiii} 

\newenvironment{aequivalenz}
{\setcounter{iiiii}{0}
\begin{list}%
{{\rm (\roman{iiiii})}}
{\usecounter{iiiii}
\parsep=0pt plus 1pt
\topsep=1pt plus 2pt minus 1pt
\itemsep=1pt plus 2pt minus 1pt
\leftmargin=3\baselineskip \labelsep=.6\baselineskip
\labelwidth=2.4\baselineskip
\rightmargin 0pt}%
}
{\end{list}}

\newenvironment{statements}%
{\setcounter{abc}{0}
\begin{list}%
{{\rm (\alph{abc})}}
{\usecounter{abc}
\parsep=0pt plus 1pt
\topsep=1pt plus 2pt minus 1pt
\itemsep=1pt plus 2pt minus 1pt
\leftmargin=3\baselineskip \labelsep=.6\baselineskip
\labelwidth=2.4\baselineskip
\rightmargin 0pt}%
}
{\end{list}}

{\begin{list}%
{$\bullet$}
{\leftmargin=3\baselineskip
\labelsep=\baselineskip \labelwidth=2.5\baselineskip
\parsep=0pt plus 1pt
\topsep=1pt plus 2pt minus 1pt
\itemsep=1pt plus 2pt minus 1pt
\rightmargin 0pt}%
}
{\end{list}}


\newcommand{\bea}{\begin{eqnarray*}}
\newcommand{\eea}{\end{eqnarray*}}
\newcommand{\beq}{\begin{equation}}
\newcommand{\eeq}{\end{equation}}
\newcommand{\begsta}{\begin{statements}}
\def\endsta{\end{statements}}
\newcommand{\begaeq}{\begin{aequivalenz}}
\def\endaeq{\end{aequivalenz}}

\def\DP{Daugavet property}

\begin{document}

\title{The $p$-Daugavet property for function spaces}

\author{Enrique A. S\'anchez P\'erez and Dirk Werner}


\subjclass[2000]{Primary 46B04; secondary  46B25}

\keywords{Daugavet property, $L_p$-space}

\thanks{The first author was partially
 supported by a grant from the Generalitat Valenciana (BEST/2009/108)
 and a grant from the Universidad Polit\'ecnica de Valencia
 (PAID-00-09/2291). Support of the Ministerio de Educaci\'on y 
Ciencia, under project \#MTM2009-14483-C02-02 (Spain)
and FEDER is also
gratefully acknowledged.}

\address{Instituto
Universitario de Matem\'atica Pura y Aplicada,  Universidad
\linebreak
Polit\'ecnica de Valencia, Camino de Vera s/n, 46071 Valencia, Spain.}
\email{easancpe@mat.upv.es}

\address{Department of Mathematics, Freie Universit\"at Berlin,
Arnimallee~6, \qquad {}\linebreak D-14\,195~Berlin, Germany}
\email{werner@math.fu-berlin.de}

\begin{abstract}
A natural extension of the Daugavet property for $p$-convex
Banach function spaces and related classes is analysed. As an
application, we extend the arguments given in the setting of the
Daugavet property to show that no reflexive space falls into this class.
\end{abstract}

\maketitle

\thispagestyle{empty}

\section{Introduction}

A Banach space $B$ is said to have the \textit{Daugavet property} if 
every rank one operator $T\dopu B \to B$ satisfies the
\textit{Daugavet equation}
$$
\|\Id+T\|=1+\|T\|, 
$$
where $\Id$ is the identity map on $B$. In the
setting of  Banach function spaces, several recent papers have
analysed for which ones the Daugavet property is satisfied. The
main examples are $L^1(\mu)$ and $L^\infty(\mu)$ whenever $\mu$
does not have any  atoms (see \cite[Section~2]{ams2000}). 
Also, if $K$ is a compact Hausdorff topological space without
isolated points, the space $C(K)$ has the Daugavet property; this
space is a function space, but not a Banach function space in the
sense that we use in this paper.

Some modifications of the Daugavet property have also been 
introduced in order to provide a weaker property for extending 
 the techniques and results that arises
for spaces with the Daugavet property
to a bigger class of spaces (see for example
\cite{boyK,martin,maroik,oik}).
In this paper we consider what we call generalized function spaces
(to be defined at the end of the introduction), a
new class that includes  
$p$-convex Banach function spaces and $C(K)$-spaces,  whose 
definition is partially motivated by the one of a
$K$-representable space given in \cite{gue}. Our generalization of the
Daugavet property is the following.

\begin{definition}
A constant~$1$ $p$-convex (generalized) function space $X$ is said
to have the \textit{$p$-Daugavet property} if and only if for every rank
one operator $T\dopu X \to X$, the equation
$$
\sup_{f \in B_{X}} \| ( f^p + T(f)^p)^{1/p} \| = (1+
\|T\|^p)^{1/p}
$$
is fulfilled. We call this equation for an operator $T$ the
\textit{$p$-Daugavet equation}.
\end{definition}

We remark that we deal with a different $p$-version of the \DP\ in our
paper~\cite{EnrDir2}. 

The aim of this paper is to characterize the $p$-Daugavet 
property and provide a description  
of the structure of the spaces that satisfy it, as well as to exhibit
a class of spaces  
having the property. As applications, and
 following the same steps as for spaces with the
Daugavet property, we prove that no reflexive constant~$1$ $p$-convex
generalized  function space  has the
$p$-Daugavet property. From
the methodological point of view, we adopt the Banach lattice
aproach to the Daugavet property (see \cite{abraali0,abraali}) but
use the geometric tools that were introduced later
(\cite{ams2000}).

We use standard notation. All the Banach spaces considered in the paper are supposed to be defined over the reals.
Let $Y$ be a Banach space.  $B_Y$ and $S_Y$ are the (closed) unit ball and the
unit sphere, respectively. $Y^*$ denotes the dual space of $Y$.
The slice $S(y^*, \varepsilon)$ defined by $y^* \in
B_{Y^*}$ and $\varepsilon >0$ is given by
$$
S(y^*, \varepsilon)= \{y \in B_Y\dopu  \langle y, y^* \rangle \ge 1-
\varepsilon \}.
$$

Let $(\Omega, \Sigma, \mu)$ be a measure
space. A \textit{Banach function space} $X(\mu)$  is an
ideal of the space $L^0(\mu)$ of classes of measurable functions
 (the usual $\mu$-a.e.\ order is considered)
that is a Banach space with a lattice norm ${\| \cdot
\|}$ such that for every $A \in  \Sigma$ of finite measure, $\chi_A
\in X(\mu)$ (see \cite[Def.~1.b.17]{lint}). If the measure $\mu$
is fixed in the context we simply write $X$ for $X(\mu)$.
$X(\mu)^+$ stands for the positive cone of $X(\mu)$ and in
general, if $S \subset X(\mu)$, we will write $S^+$ for its
positive part $S \cap X(\mu)^+$.

Let us now recall two basic geometric properties of  Banach
lattices. If $X$ is a Banach lattice, it is called
\textit{$p$-convex} if there is a constant $K$ such that for each finite
sequence $(x_i)_{i=1}^n$ in $X$,
$$
\Bigl\| \Bigl( \sum_{i=1}^n |x_i|^p \Bigr)^{1/p} \Bigr\|_X
 \le K \Bigl(\sum_{i=1}^n \|x_i\|_X^p \Bigr)^{1/p}.
$$
An operator $T\dopu X \to F$ on a  Banach lattice  $X$ is 
\textit{$p$-concave}
if there is a constant $k$ such that for every sequence
$(x_i)_{i=1}^n$ in $X$,
$$
\Bigl(\sum_{i=1}^n \|T(x_i)\|_F^p \Bigr)^{1/p} \le  k \Bigl\| \Bigl(
\sum_{i=1}^n |x_i|^p \Bigr)^{1/p} \Bigr\|_X.
$$
The quantities $M^{(p)}(X)$ and $M_{(p)}(T)$ are respectively  the
best constants  in the above inequalities. If $T$ is the identity map,
we also say that $X$ is $p$-concave and denote the corresponding
constant by $M_{(p)}(X)$. Throughout the paper we assume that the
$p$-convexity constants of the spaces are equal to~$1$; we will
write that the space $X$ is constant~$1$ $p$-convex for short.

In order to extend these lattice notions to a bigger family of spaces, we introduce
the class of \textit{generalized function spaces} (g.f.s.\ for
short). Let $1 \le p<\infty$. Let $\Delta$ be a set and consider a
family of measure spaces $\{(A_\delta, \Sigma_\delta, \mu_\delta)\dopu 
\delta \in \Delta\}$. Let $(\mathbb{R}^{A_\delta})_{\mu_\delta}$
be the space of classes of $\mu_\delta$-a.e.\ equal measurable
functions, usually denoted by $L^0(A_\delta,\mu_\delta)$. 
For each element $f \in \prod_{\delta \in \Delta} (
\mathbb{R}^{A_\delta})_{\mu_\delta}$, the modulus $|f|$ 
is  defined pointwise by $|f|(\delta)=|f(\delta)|$. 
 Let $X$ be a Banach space of (classes of) functions $f \in \prod_{\delta \in \Delta} (
\mathbb{R}^{A_\delta})_{\mu_\delta}$ such that $|f| \in X$ and with a norm with lattice properties when the natural order inherited in $X$ from the product is considered. Thus, $X$ is a Banach space
of functions whose values at each point $\delta$ are classes of
$\mu_\delta$-a.e.\ equal real functions  
such that the modulus of each element belongs to the space, too, and with a lattice norm; notice that the counting
measure on $A_\delta$ is also admissible, which is important for
including $C(K)$ spaces. 
The measure $\mu_\delta$ will not be written explicitly if it is not
relevant in the context. 

Note that, if $f \in X$, we can write it at each point $\delta
\in \Delta$ as $f(\delta) = \sign \{f(\delta)\} |f(\delta)|$ and
define the map $i_p$ from $X$ taking values in $\prod_{\delta \in
\Delta} (\mathbb{R}^{A_\delta})_{\mu_\delta}$ by means of its pointwise evaluation 
$$
i_p(f)(\delta):= \sign \{f(\delta)\} |f(\delta)|^p,\quad f \in X.
$$
Here, $\sign \{a\}$ denotes the sign   of the real number~$a$.
We use the notation 
$$
f^p:=i_p(f);
$$
we caution the reader that for even integers $f^p$ need not be the
same as $|f|^p$. 
The map $i_p$ is
clearly an injection; we denote by $i_{1/p}$ (i.e.,
$i_{1/p}(g)=g^{1/p}$ for every $g \in i_p(X)$), the inverse
map.

We say that a Banach space of functions as above is a
\textit{constant~$1$ $p$-convex generalized function space} if for
every finite family of elements $x_1,\dots ,x_n \in X$ the function $
( \sum_{i=1}^n x_i^p )^{1/p} $ belongs to $X$ and
$$
\Bigl\| \Bigl( \sum_{i=1}^n |x_i|^p \Bigr)^{1/p} \Bigr\| 
\le \Bigl( \sum_{i=1}^n \|x_i\|^p \Bigr)^{1/p}.
$$

It can easily be seen that this class includes for instance
constant~$1$ $p$-convex Banach function spaces (take $\Delta$ a
singleton), $C(K)$-spaces and 
$C(K,Y)$-spaces, where $Y$ is a constant~$1$ $p$-convex Banach
function space.

\section{The $p$-convexification of the Daugavet property}

Let $X$ be a constant~$1$ $p$-convex g.f.s. Consider the linear
space $F_{X,p}$ of functions from $X$ to the corresponding product
$\prod_{\delta \in \Delta} (\mathbb{R}^{A_\delta})_{\mu_\delta}$
that are finite sums of the elements of the set
$$ S_{X,p}= \Bigl\{
\phi\dopu X \to \prod_{\delta \in \Delta}
(\mathbb{R}^{A_\delta})_{\mu_\delta}\dopu  \phi=i_p \circ T, \ T\dopu X
\to X \ \textrm{linear and continuous} \Bigr\}.
$$
Define the function norm on the space $F_{X,p}$  by
$$
\|\psi\|_{F_{X,p}}:=\sup_{f \in B_X} \Bigl\| \Bigl(\sum_{i=1}^n
\phi_i(f) \Bigr)^{1/p} \Bigr\|^p_{X}, \quad \psi=\sum_{i=1}^n \phi_i(f)  \in
F_{X,p}.
$$
Clearly, this formula is independent of the representation that is
used for the element $\phi$ as a sum of  elements of 
$S_{X,p}$. It can easily be  checked that it is a norm on the
 space $F_{X,p}$, just taking into account that
 ${\||\cdot|^{1/p}\|^p_X}$ is a norm in $i_p(X)$ since 
  $X$ is a constant~$1$ $p$-convex g.f.s.\ and that the map $i_p$ is 
one-to-one.

\begin{lemma}  \label{homoge}
Let $X$ be a constant~$1$ $p$-convex g.f.s. Suppose that $T$
and $S$ are operators from $X$ into $X$ such that
\begin{equation}\label{eq1}
\sup_{f \in B_X}\||T(f)^p+ S(f)^p|^{1/p}\|= (\|T\|^p+\|S\|^p)^{1/p}.
\end{equation}
Then for every $\alpha, \beta \ge 0$,
$$
\sup_{f \in B_X} \||(\alpha T(f))^p+ (\beta S(f))^p |^{1/p} \|= \big(
\|\alpha T\|^p+ \|\beta S\|^p \big)^{1/p}. 
$$
\end{lemma}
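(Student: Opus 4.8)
The plan is to recognize that the whole statement lives inside the normed space $F_{X,p}$, where it reduces to the elementary fact that equality in the triangle inequality is preserved under scaling by nonnegative factors. I would set $\Phi := i_p\circ T$ and $\Psi := i_p\circ S$, which belong to $S_{X,p}\subset F_{X,p}$. Two observations translate the hypothesis into a statement about these vectors. First, since $i_{1/p}$ inverts $i_p$, we have $\|\Phi\|_{F_{X,p}} = \sup_{f\in B_X}\|(T(f)^p)^{1/p}\|^p = \sup_{f\in B_X}\|Tf\|^p = \|T\|^p$, and likewise $\|\Psi\|_{F_{X,p}} = \|S\|^p$. Second, because the lattice norm satisfies $\|h\| = \|\,|h|\,\|$ and $(\Phi(f)+\Psi(f))^{1/p} = (T(f)^p+S(f)^p)^{1/p}$ has modulus $|T(f)^p+S(f)^p|^{1/p}$, the supremum in \eqref{eq1} is exactly $(\|\Phi+\Psi\|_{F_{X,p}})^{1/p}$. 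Hence \eqref{eq1} is equivalent to $\|\Phi+\Psi\|_{F_{X,p}} = \|\Phi\|_{F_{X,p}} + \|\Psi\|_{F_{X,p}}$, i.e.\ to equality in the triangle inequality.

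Next I would record the sign-homogeneity of $i_p$: for $\alpha\ge 0$ one has pointwise $i_p(\alpha g) = \sign\{g\}(\alpha|g|)^p = \alpha^p\, i_p(g)$, so that $(\alpha T(f))^p = \alpha^p T(f)^p$ and $\alpha^p\Phi = i_p\circ(\alpha T)\in S_{X,p}$. Consequently $\alpha^p\Phi + \beta^p\Psi = i_p\circ(\alpha T) + i_p\circ(\beta S)$ lies in $F_{X,p}$, and, raising to the power $1/p$ exactly as above and using $\|\alpha T\|^p = \alpha^p\|T\|^p = \alpha^p\|\Phi\|_{F_{X,p}}$, the asserted conclusion is equivalent to $\|\alpha^p\Phi+\beta^p\Psi\|_{F_{X,p}} = \alpha^p\|\Phi\|_{F_{X,p}} + \beta^p\|\Psi\|_{F_{X,p}}$.

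It then remains to prove the purely normed-space fact that $\|\Phi+\Psi\| = \|\Phi\|+\|\Psi\|$ forces $\|a\Phi+b\Psi\| = a\|\Phi\| + b\|\Psi\|$ for all $a,b\ge 0$. I would do this by Hahn--Banach: pick a norm-one functional $F$ on $F_{X,p}$ with $F(\Phi+\Psi) = \|\Phi+\Psi\|$. From $F(\Phi)\le\|\Phi\|$, $F(\Psi)\le\|\Psi\|$ and $F(\Phi)+F(\Psi) = \|\Phi\|+\|\Psi\|$ one gets $F(\Phi)=\|\Phi\|$ and $F(\Psi)=\|\Psi\|$; hence $\|a\Phi+b\Psi\| \ge F(a\Phi+b\Psi) = a\|\Phi\|+b\|\Psi\|$, while the reverse inequality is the triangle inequality. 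Taking $a=\alpha^p$, $b=\beta^p$ completes the argument.

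The genuinely substantive part is the bookkeeping of the first two paragraphs: verifying $\|i_p\circ T\|_{F_{X,p}}=\|T\|^p$, that \eqref{eq1} is precisely the triangle-inequality equality in $F_{X,p}$, and the sign-homogeneity $(\alpha T(f))^p=\alpha^p T(f)^p$ that keeps the scaled vectors inside $F_{X,p}$. Once this dictionary is set up, the analytic core is the one-line supporting-functional argument, which is valid in any normed space (completeness of $F_{X,p}$ is not needed). The degenerate cases $\alpha=0$, $\beta=0$, $T=0$ or $S=0$ need no separate treatment, since both the functional estimate and the triangle inequality cover them directly.
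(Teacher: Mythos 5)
Your proof is correct and takes essentially the same route as the paper: both translate the hypothesis into the equality $\|\phi+\varphi\|_{F_{X,p}}=\|\phi\|_{F_{X,p}}+\|\varphi\|_{F_{X,p}}$ for $\phi=i_p\circ T$, $\varphi=i_p\circ S$, use the sign-homogeneity $(\alpha T(f))^p=\alpha^p T(f)^p$, and substitute $a=\alpha^p$, $b=\beta^p$ at the end. The only difference is that where the paper cites Lemma~11.4 of \cite{abraali} for the normed-space fact that equality in the triangle inequality propagates to all nonnegative combinations, you prove that fact inline via the standard Hahn--Banach supporting-functional argument, which is precisely the proof behind the citation.
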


\begin{proof}
Consider the functions $\phi$ and $\varphi$ in $S_{X,p}$ defined
by $\phi= i_p \circ T$ and $\varphi=i_p \circ S$ and note that
$\|\phi\|_{F_{X,p}}=\|T\|^p$ and $\|\varphi\|_{F_{X,p}}=\|S\|^p$.
Therefore, by (\ref{eq1}) we have that
$$
\|\phi+\varphi\|_{F_{X,p}}= \|\phi\|_{F_{X,p}}+\|\varphi\|_{F_{X,p}}.
$$
This implies, by Lemma~11.4 in \cite{abraali} (or
\cite[p.~78]{Dirk-IrBull}), 
that for every couple of non-negative real numbers $a$ and $b$,
$$
\|a \phi+b \varphi\|_{F_{X,p}} = a \|\phi\|_{F_{X,p}}+b\|\varphi\|_{F_{X,p}}.
$$
But this can be rewritten as
$$
\sup_{f \in B_X} \||a (T(f))^p+ b(S(f))^p |^{1/p} \|^p_X= a \|T\|^p+ b\|S\|^p.
$$
Thus, the result holds just by considering $\alpha=a^{1/p}$ and $\beta=b^{1/p}$.
\end{proof}

\begin{proposition} \label{forone}
 Let $X$ be a constant~$1$ $p$-convex g.f.s.  Suppose that $T\dopu
 X\to X$ is the rank one operator 
given by $T(f):=\langle f, g^* \rangle g$, where $g^* \in X^*$ and
$g \in X$. The following statements are equivalent:
\begin{itemize}
 \item[(1)]  $\sup_{f \in B_X} \||f^p+ T(f)^p |^{1/p} \|_X= (1 + \|T\|^p)^{1/p}.$
\item[(2)] For every $\varepsilon >0$ there is an element $h \in S(\frac{g^*}{\|g^*\|},\varepsilon)$ such that
$$
\Bigl\| \Bigl| \Bigl(\frac{g}{\|g\|} \Bigr)^p + h^p \Bigr|^{1/p}
\Bigr\|^p \ge 2- 2\varepsilon. 
$$
\end{itemize}
\end{proposition}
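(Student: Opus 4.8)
The plan is to reduce to the normalized case via Lemma~\ref{homoge} and then run a $p$-convex version of the usual rank-one Daugavet argument. Assume $g\neq0$ and $g^*\neq0$ (otherwise $T=0$ and~(2) is vacuous). Applying Lemma~\ref{homoge} with $S=\Id$ and the operator $T/\|T\|$ in the place of $T$, and taking $\alpha=\|T\|$, $\beta=1$ (the scaling being invertible), shows that statement~(1) for $T$ is equivalent to statement~(1) for the normalized operator $T_0(f)=\langle f,g^*/\|g^*\|\rangle\,(g/\|g\|)$, whose norm is~$1$; since statement~(2) involves only $g/\|g\|$ and $g^*/\|g^*\|$, it is literally the same for $T$ and $T_0$. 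Hence I may assume $\|g\|=\|g^*\|=\|T\|=1$, so that~(1) reads $M^p:=\sup_{f\in B_X}\||f^p+T(f)^p|^{1/p}\|^p=2$, and~(2) asks that for each $\eps>0$ there be $h\in S(g^*,\eps)$ with $\||g^p+h^p|^{1/p}\|^p\ge2-2\eps$.

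All the work takes place in $i_p(X)$, equipped with the norm $\|w\|_{i_p(X)}:=\||w|^{1/p}\|_X^p$, which is genuinely subadditive because $X$ is constant~$1$ $p$-convex. One has $\|f^p\|_{i_p(X)}=\|f\|^p$ and, writing $c=\langle f,g^*\rangle$, $\|T(f)^p\|_{i_p(X)}=\|T(f)\|^p=|c|^p$. Subadditivity at once gives the easy estimate $M^p=\sup_f\|f^p+T(f)^p\|_{i_p(X)}\le\sup_f(\|f\|^p+|c|^p)\le2$, so throughout~(1) is equivalent to the single inequality $M^p\ge2$.

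For (2)$\Rightarrow$(1) I would test the supremum at $f=h$. Here $c=\langle h,g^*\rangle\in[1-\eps,1]$ and $T(h)^p=c^pg^p$, so $h^p+T(h)^p=(h^p+g^p)-(1-c^p)g^p$; subadditivity together with $\|(1-c^p)g^p\|_{i_p(X)}=1-c^p$ gives $\|h^p+T(h)^p\|_{i_p(X)}\ge(2-2\eps)-(1-c^p)\ge2-(p+2)\eps$, using the Bernoulli bound $1-c^p\le p\eps$. Letting $\eps\to0$ yields $M^p\ge2$. For the converse (1)$\Rightarrow$(2), fix $\eps$ small and pick $f\in B_X$ with $\|f^p+T(f)^p\|_{i_p(X)}\ge2-\eps$. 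Comparing with $\|f\|^p+|c|^p\le2$ forces $\|f\|^p\ge1-\eps$ and $|c|^p\ge1-\eps$, so $f$ is almost normalized and almost norms $g^*$. Writing $u=f^p$, $v=T(f)^p$, both of $i_p(X)$-norm at least $1-\eps$, the near-saturation of subadditivity (if $\|u+v\|_{i_p(X)}\ge\|u\|_{i_p(X)}+\|v\|_{i_p(X)}-\eps$ and both norms are at least $m>0$, the corresponding unit vectors have sum of $i_p(X)$-norm at least $2-\eps/m$) forces $\hat u:=u/\|u\|_{i_p(X)}=(f/\|f\|)^p$ and $\hat v:=v/\|v\|_{i_p(X)}=\sign(c)\,g^p$ to satisfy $\|\hat u+\hat v\|_{i_p(X)}\ge2-2\eps$. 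Finally $h:=\sign(c)\,f/\|f\|\in S_X$ has $\langle h,g^*\rangle=|c|/\|f\|\ge1-\eps$, i.e.\ $h\in S(g^*,\eps)$, while $g^p+h^p=\sign(c)(\hat u+\hat v)$, whence $\||g^p+h^p|^{1/p}\|^p=\|\hat u+\hat v\|_{i_p(X)}\ge2-2\eps$; this is~(2) (for the remaining large $\eps$ any witness obtained for $\eps=\tfrac12$ works).

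The only genuine obstacle is a weight mismatch: in the $p$-Daugavet equation the vector $g^p$ is multiplied by the variable scalar $\sign(c)|c|^p$, whereas statement~(2) controls the unit-weight sum $g^p+h^p$. The reduction through Lemma~\ref{homoge} is precisely what normalizes this weight to~$1$; once that is done the rest is bookkeeping, namely tracking the sign $\sign(c)$ produced by the signed power map $i_p$ (which is why $h$ carries the factor $\sign(c)$) and checking that every appeal to subadditivity is legitimate, which is guaranteed by the constant~$1$ $p$-convexity of $X$.
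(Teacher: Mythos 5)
Your proof is correct and follows essentially the same route as the paper's: reduce to $\|g\|=\|g^*\|=\|T\|=1$ via Lemma~\ref{homoge}, note that $w\mapsto\||w|^{1/p}\|_X^p$ is subadditive on $i_p(X)$ by constant-$1$ $p$-convexity (which also gives the trivial upper bound $M^p\le 2$), and run triangle-inequality estimates on the decomposition of $h^p+T(h)^p$ against $h^p+g^p$, with the sign of $\langle h,g^*\rangle$ absorbed exactly as in the paper's replacement of $h$ by $-h$. The only cosmetic difference is in (1)$\Rightarrow$(2): the paper picks $h\in S_X$ directly (the supremum is approached on the sphere by homogeneity of $i_p$ and $T$) and gets $1-\langle h,g^*\rangle^p\le\eps$ in one step, whereas you take $f\in B_X$ and normalize afterwards through your near-saturation observation --- the same estimate in slightly different clothing.
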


\begin{proof}
Let us prove first that (1) implies (2). By  Lemma~\ref{homoge},
we can assume that the norm of $T$ is one, just by replacing $T$
by $T/\|T\|= T/\|g\|\|g^*\|$, and representing the resulting
operator with two norm one  elements that we still denote by $g$
and $g^*$. Let $\varepsilon >0$. Take $h \in S_X$ such that
$$
\||h^p+ T(h)^p |^{1/p} \|^p_X\ge 2-\varepsilon.
$$
We can assume that $\langle h,g^* \rangle \ge 0$ (otherwise,
replace $h$ by $-h$). Notice first that since $X$ is constant~$1$
$p$-convex, 
$$
1+ \langle h, g^* \rangle^p\|g\|_X^p = \| |h^p|^{1/p} \|^p_X + \|
|T(h)^p|^{1/p}\|^p_X \ge 2-\varepsilon, 
$$
which implies that $\langle h, g^* \rangle \ge (1-
\varepsilon)^{1/p} \ge 1-\varepsilon$. Consequently, $g \in
S(g^*,\varepsilon)$. On the other hand, also by the constant~$1$
$p$-convexity of $X$,
\bea
2-\varepsilon &\le& 
\| |h^p+T(h)^p|^{1/p}\|^p_X \\
&\le& 
\| |h^p+g^p|^{1/p} \|^p_X + \| |T(h)^p-g^p|^{1/p} \|^p_X \\
&=& 
\| | h^p+g^p|^{1/p} \|^p_X + \| |(1-\langle h,g^* \rangle^p)g^p|^{1/p} \|^p
\\
&\le& 
\| | h^p+g^p|^{1/p} \|^p_X + (1-\langle h,g^* \rangle^p) \|
|g^p|^{1/p} \|^p \\
&\le& 
\| | h^p+g^p|^{1/p} \|^p_X +  \varepsilon.
\eea
 This gives the result.

For the converse, first notice that
 the inequality 
$$
\sup_{f \in B_X} \||f^p+ T(f)^p |^{1/p} \|^p_X \le 1 + \|T\|^p
$$
always holds, by the constant~$1$ $p$-convexity of $X$. By
Lemma~\ref{homoge}, we can assume that $\|g\|=1$ and $\|g^*\|=1$,
and then $\|T\|=1$. Let $\varepsilon >0$ and $h \in
S({g^*},\varepsilon)$ such that
$$
\| |g^p + h^p |^{1/p}\|^p \ge 2- 2\varepsilon.
$$
Then, again by the constant~$1$ $p$-convexity of $X$,
\bea
2- 2\varepsilon \le \| |g^p + h^p |^{1/p}\|^p &=&
\| |g^p-T(h)^p + T(h)^p + h^p |^{1/p}\|^p\\
&\le&
\| |g^p-T(h)^p|^{1/p}\|^p + \||T(h)^p + h^p |^{1/p}\|^p \\
&\le& 
(1- \langle h, g^* \rangle^p) + \||T(h)^p + h^p |^{1/p}\|^p\\
&\le&
(1-(1-\varepsilon)^p)  + \||T(h)^p + h^p |^{1/p}\|^p.
\eea
Since this holds for every $\varepsilon >0$, we obtain the result.
\end{proof}

\begin{example}
Let $X(\mu)$ be a constant~$1$ $p$-convex Banach function space. 
Consider the set $P$ of positive rank one operators
from $X(\mu) \to X(\mu)$, i.e.,
$$
P=\{T\dopu X \to X \dopu 
T=g^* \otimes g, \ g^* \in (X(\mu)^*)^+, \ g \in (X(\mu))^+ \}.
$$
Proposition~\ref{forone} gives directly the
following result by taking into account that the supremum in (1) of
Proposition~\ref{forone} can be 
computed using just positive elements.

 The following assertions are equivalent:
\begin{itemize}
 \item[(1)] For every positive rank-one operator $T \in P$,
$$
\sup_{f \in B_X} \||f^p+ T(f)^p |^{1/p} \|_X= (1 +
\|T\|^p)^{1/p}.
$$
\item[(2)] For every $\varepsilon >0$, every $g \in S_{X}^+$ and every $g^* \in S_{X^*}^+$ there is an element
$h \in (S(g^*,\varepsilon))^+$ such that
$$
\| |g^p + h^p |^{1/p}\|^p \ge 2- 2\varepsilon.
$$
\end{itemize}

For instance, $L^p$-spaces satisfy
 the statements above, since they are $p$-concave and $M_{(p)}(L^p)=1$. This includes the case of $\ell^p$; recall
that the Daugavet property is not satisfied for $\ell^1$, so the
property given by the equivalent assertions above, at least for
the case $p=1$, is strictly weaker than the Daugavet property. We
will show that this  is also the case for $p>1$ but in a more
dramatic sense, since $L^p(\mu)$ over an atomless measure $\mu$ does
not satisfy the $p$-Daugavet property. The reader can find more
information about what is called the positive Daugavet property in
\cite[Section~5]{bilik}.

\end{example}

Besides this example and taking into account the purpose of this
paper, the main application of
Proposition~\ref{forone} is the geometric
characterization of the $p$-Daugavet property that is given in
the following result.

\begin{corollary}
Let $X$ be a constant~$1$ $p$-convex g.f.s.  The following
are equivalent:
\begin{itemize}
 \item[(1)] $X$ has the $p$-Daugavet property.
\item[(2)] For every $\varepsilon >0$, every $g \in S_X$ and every
  $g^* \in S_{X^*}$ there is an element $h \in S(g^*,\varepsilon)$
  such that 
$$
\| |g^p + h^p |^{1/p}\|^p \ge 2- 2\varepsilon.
$$
\end{itemize}
\end{corollary}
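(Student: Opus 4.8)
The plan is to obtain the corollary as a direct consequence of Proposition~\ref{forone}, which already settles the equivalence for a single rank one operator; the only additional work is to quantify over all rank one operators and to reconcile the normalizations on the two sides. First I would recall that every rank one operator on $X$ can be written as $T(f) = \langle f, g^* \rangle g$ with $g^* \in X^*$ and $g \in X$, and that the $p$-Daugavet property requires the $p$-Daugavet equation to hold for every such $T$. The degenerate case $g = 0$ or $g^* = 0$ gives $T = 0$, for which the equation reduces to $\sup_{f \in B_X}\|f\| = 1$ and holds trivially, so I may restrict attention to nonzero $g$ and $g^*$.

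For the implication $(1)\Rightarrow(2)$, I would fix $\varepsilon > 0$, $g \in S_X$ and $g^* \in S_{X^*}$, and form the operator $T(f) = \langle f, g^* \rangle g$, noting that $\|T\| = \|g\|\,\|g^*\| = 1$. The $p$-Daugavet property yields statement~(1) of Proposition~\ref{forone} for this particular $T$, and applying the proposition produces an element $h \in S(g^*, \varepsilon)$ as in its statement~(2); since $\|g\| = \|g^*\| = 1$, this is precisely the inequality claimed in (2) of the corollary.

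For the converse $(2)\Rightarrow(1)$, I would start from an arbitrary rank one operator $T(f) = \langle f, g^* \rangle g$ with $g, g^* \neq 0$ and pass to the normalized data $\tilde g = g/\|g\|$ and $\tilde g^{\,*} = g^*/\|g^*\|$. Applying hypothesis (2) of the corollary to the pair $(\tilde g, \tilde g^{\,*}) \in S_X \times S_{X^*}$ gives, for every $\varepsilon > 0$, an $h \in S(\tilde g^{\,*}, \varepsilon)$ satisfying the required lower bound, which is exactly condition~(2) of Proposition~\ref{forone} for $T$. The proposition then returns condition~(1), i.e.\ the $p$-Daugavet equation for $T$; as $T$ was arbitrary, $X$ has the $p$-Daugavet property.

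I expect no deep obstacle here, since all the analytic content is carried by Proposition~\ref{forone} (which in turn relies on Lemma~\ref{homoge} to reduce to norm-one operators). The only point demanding care is the bookkeeping of normalizations: one must check that ranging over $S_X \times S_{X^*}$ in the corollary corresponds exactly to the normalized elements $g/\|g\|$ and $g^*/\|g^*\|$ appearing in the statement of Proposition~\ref{forone}, and that the homogeneity built into that proposition lets one move freely between a general rank one operator and its norm-one representative.
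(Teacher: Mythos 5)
Your proposal is correct and follows exactly the route the paper intends: the paper states this corollary without a separate proof, as an immediate consequence of Proposition~\ref{forone}, and your deduction (quantifying over all rank one operators, handling $T=0$ trivially, and using the normalization $g/\|g\|$, $g^*/\|g^*\|$ already built into the proposition via Lemma~\ref{homoge}) is precisely the routine bookkeeping being left to the reader. Nothing is missing.
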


We now define a new class of spaces that we call
$(p,\mathcal{K})$-representable spaces. This definition
generalizes in a sense the one given in \cite[Definition~2.3]{gue}.

\begin{definition} \label{prep}

Let $1 \le p < \infty$ and let $I$ be an index set. Consider a family
$\mathcal{K}=\{K_i\dopu  i \in I\}$ of (disjoint) compact Hausdorff
spaces. Let $X$ be a Banach space. We say that
$X$ is \textit{$(p,\mathcal{K})$-representable} if there exists a  family
$(X_k)_{k \in \bigcup_{i \in I} K_i}$ of constant~$1$ $p$-convex
Banach function spaces or $C(K)$-spaces such that:

\begin{itemize}
\item[(i)]
Each $x \in X$ can be identified linearly with its coordinates in
the product
$\prod_{k \in \bigcup_{i \in I} K_i} X_k$, and if $i \in I$, 
the restriction of $x$ to the
product $\prod_{k \in K_i} X_k$ belongs to $X$. 
Also, for every finite family $x_1,\dots ,x_n \in X$, the element 
$(\sum_{\nu=1}^n x_\nu^p)^{1/p}$ that is defined pointwise
by means of its representation belongs to $X$. 

\item[(ii)]
Consider the space $\bigoplus_{i \in I}^{ \infty} C(K_i)$, where the
sup norm for the sum is considered. If $x \in X$,
 for every $(\varphi_i)_{i \in I} \in \bigoplus_{i
\in I}^{ \infty} C(K_i)$ the product $(\varphi_i)x= (\varphi_i
\rest xi)$ belongs to $X$.

\item[(iii)] For every $x \in X$,
$$
\|x\| = \sup \Bigl\{ \Bigl(\sum_{j\in F} \|x(k_{j})\|^p \Bigr)^{1/p}\dopu  
F\subset I \ \textrm{finite} , \ k_{j} \in K_{j}
 \Bigr\} <\infty .
$$

\item[(iv)]
 For every $x \in X$, $i \in I$ and $\varepsilon >0$, the set 
$$
\{k \in K_i\dopu  \|x(k)\| \ge (1- \varepsilon) \|\rest xi\| \}
$$ 
is infinite.

\end{itemize}

\end{definition}

\begin{remark} \mbox{}
\begin{itemize}
\item[(1)]
 If $F$ is a finite subset of $I$, we write $\rest xF$ for the
element $\rest xF= \sum_{i \in F} \rest xi$ that coincides with the
projection of $x$ in the coordinates belonging to $\prod_{k \in
\bigcup_{i \in F} K_i} X_k$.

\item[(2)] Notice that since each space $X_k$ has lattice
properties, by the description of the norm required in (iii) 
for every $(\varphi_i) \in B_{\bigoplus_{i \in I}^{\infty} C(K_i)}$ and $x \in X$,
$\|\varphi x\| \le \|x\|$.

\item[(3)] Straightforward calculations show that if the space $X$ is
  $(p, \mathcal{K})$-representable, then 
it is a constant~$1$ $p$-convex g.f.s.

\item[(4)]
The chief example of a $(p, \mathcal{K})$-representable space is the
$\ell^p$-sum \linebreak 
$\bigoplus_{i\in I}^p L^\infty (\mu_i)$; here $\mu_i$ is a positive
nonatomic measure. Recall that every space $L^\infty
(\Omega,\Sigma,\mu)$ is lattice isometrically isomorphic to some
$C(K)$ with $K$ a compact Hausdorff space \cite[p.~104]{Sch}, and if
$\mu$ is nonatomic, then $K$ is perfect so that (iv) of
Definition~\ref{prep} holds.   
Note that, by construction,  $\bigoplus_{i\in I}^p L^\infty(\mu_i)$ is
a constant-$1$ $p$-convex g.f.s.
\end{itemize}
\end{remark}

\begin{proposition}\label{prop2.8}
If $X$ is $(p,\mathcal{K})$-representable, then it has the
$p$-Daugavet property.
\end{proposition}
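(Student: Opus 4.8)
The plan is to verify condition (2) of the Corollary, that is, the geometric characterization of the $p$-Daugavet property. So I fix $\varepsilon > 0$, $g \in S_X$ and $g^* \in S_{X^*}$, and I must produce $h \in S(g^*, \varepsilon)$ with $\||g^p + h^p|^{1/p}\|^p \ge 2 - 2\varepsilon$. The idea is that since $g \in S_X$, property (iii) of Definition~\ref{prep} lets me find a finite set $F \subset I$ and points $k_j \in K_j$ for $j \in F$ such that $\bigl(\sum_{j \in F} \|g(k_j)\|^p\bigr)^{1/p}$ is as close to $1$ as I like. I want to construct $h$ that ``looks like'' $g$ on these finitely many coordinates (so that $\||g^p + h^p|^{1/p}\|^p$ is large, near $2$) but that is simultaneously forced into the slice $S(g^*, \varepsilon)$.

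The key mechanism for getting into the slice is property (iv) together with the $C(K_i)$-module structure from (ii). Since $g^* \in S_{X^*}$, I pick a norming element: some $w \in S_X$ (or nearly norming) with $\langle w, g^* \rangle \ge 1 - \delta$ for small $\delta$. By (iii) applied to $w$, the norm of $w$ is captured on finitely many coordinates $K_i$, and by (iv) the set of points $k \in K_i$ where $\|w(k)\|$ is nearly maximal is \emph{infinite}. The plan is to exploit this infiniteness: using bump functions $(\varphi_i) \in \bigoplus_{i \in I}^\infty C(K_i)$ supplied by (ii), I can ``move'' the mass of $g$ to points of $K_i$ that are distinct from, but arbitrarily close to (in the role they play for $w$), the points carrying the norm of $w$. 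Concretely, I build $h$ by taking the relevant pieces of $g$ and relocating/superimposing them at a fresh set of points where $w$ is also nearly norming; because there are infinitely many such points, I have enough room to place the (finitely many) pieces of $g$ without disturbing the value $\langle h, g^* \rangle$, which I arrange to be close to $\langle w, g^* \rangle \ge 1 - \delta$, hence $h \in S(g^*, \varepsilon)$.

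More carefully, I would proceed as follows. First, I choose the finite support $F$ for $g$ and a norming vector for $g^*$, fixing the associated finite index sets and the near-maximal point sets from (iv). Since those point sets are infinite, I can select new points $k'_j$ disjoint from the already-used ones, and use the Urysohn-type functions in $C(K_i)$ guaranteed by compact Hausdorffness to construct multipliers $(\varphi_i)$ that transplant the value $g(k_j)$ to $k'_j$ while leaving the norming behaviour of $g^*$ essentially intact. The resulting $h$ should satisfy, coordinatewise, $|h(k'_j)| \approx |g(k_j)|$ at the new points and carry the norm of the $g^*$-norming vector, so that $\||g^p+h^p|^{1/p}\|^p$ picks up a contribution of about $\sum_{j}\|g(k_j)\|^p \approx 1$ from the $g$-part and about $1$ from the $h$-part, giving the required lower bound $2 - 2\varepsilon$ once the error parameters $\delta$ and the gaps in (iii) and (iv) are chosen small enough.

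The main obstacle I expect is the simultaneous control of two competing requirements: $h$ must be large ``where $g$ is large'' to make the sum $\||g^p+h^p|^{1/p}\|^p$ close to $2$, yet $h$ must also lie in the slice $S(g^*,\varepsilon)$, which constrains $h$ in the direction of whatever norms $g^*$. The whole point of hypothesis (iv) — the \emph{infiniteness} of the near-maximal sets — is to decouple these: there is always enough room to place the mass of $h$ at points that serve $g^*$ well but are disjoint from the finitely many points supporting $g$, so the two contributions add rather than interfere. Making the bookkeeping of these disjoint finite collections of points precise, and verifying that the transplanted $h$ genuinely stays near-norming for $g^*$ uniformly in the error parameters, is the delicate part; the norm computations themselves are routine once the points and multipliers are correctly chosen, since everything reduces via (iii) to finite $\ell^p$-sums of the $C(K)$/Banach-function-space norms on the individual coordinates.
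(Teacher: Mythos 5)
Your outer scaffolding (verifying the slice criterion of the Corollary, using (iii) to capture norms on finitely many indices, and invoking (iv) together with Urysohn bumps from (ii)) coincides with the paper's, but the core mechanism of your construction fails. Placing the mass of $h$ at fresh points of $K_i$ \emph{disjoint} from the points carrying $g$ makes the two contributions interfere, not add: by (iii) the norm is computed by choosing \emph{one} point $k_j$ per index $j$ and forming $\bigl(\sum_{j\in F}\|g(k_j)^p+h(k_j)^p\|^{1}\bigr)$-type expressions, so within each block the norm is of sup type, and to make $\|(g(k_j)^p+h(k_j)^p)^{1/p}\|$ close to $2^{1/p}\|g(k_j)\|$ you need $h(k_j)\approx g(k_j)$, with the same sign, at the \emph{same} point. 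Test your plan on the simplest case $X=C(K)$ with $I$ a singleton and $X_k=\mathbb{R}$: if $g,h\in S_{C(K)}$ have disjoint supports, then $\||g^p+h^p|^{1/p}\|^p=\max(\|g\|_\infty^p,\|h\|_\infty^p)=1$, far from $2-2\varepsilon$. Your heuristic that ``the two contributions add rather than interfere'' is exactly backwards for sup-norm blocks. The paper does the opposite of relocation: it applies (iv) to the given vector $g$ itself (not to the norming vector $w$, as you do), obtaining infinitely many points $w_{i,n}\in K_i$ with $\|g(w_{i,n})\|>(1-\varepsilon)^{1/2p}\,\|g|_i\|$, and builds $y_n=\sum_{i\in F}f_{i,n}\,g|_i+\sum_{i\in F_2}(1-f_{i,n})\,z|_i$ with bumps satisfying $f_{i,n}(w_{i,n})=1$; then $y_n(w_{i,n})=g(w_{i,n})$, so evaluating at precisely these points yields the doubled value $2^{1/p}\bigl(\sum_{i\in F}\|g(w_{i,n})\|^p\bigr)^{1/p}\ge 2^{1/p}(1-\varepsilon)^{1/p}$.

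The second gap is slice membership. You give no valid reason why your $h$ lies in $S(g^*,\varepsilon)$: the functional $g^*\in S_{X^*}$ is abstract, not given by point evaluations or integration against a density, so placing mass ``at points where $w$ is nearly norming'' controls nothing about $\langle h,g^*\rangle$, and transplanted pieces of $g$ have no reason to pair with $g^*$ as $w$ does. This is where the infiniteness in (iv) is genuinely needed, and for a different purpose than the one you assign to it: the bumps $(f_{i,n})_n$, sitting at infinitely many distinct points, are normalised and disjointly supported, hence weakly null in $C(K_i)$; since by (ii) multiplication by a fixed element of $X$ defines a bounded, hence weak-to-weak continuous, map $\bigoplus_{i\in F}^\infty C(K_i)\to X$, the sequence $y_n$ converges weakly to $z|_{F_2}$, where $z\in S_X$ is chosen in advance with $\langle z|_{F_2},g^*\rangle\ge 1-\varepsilon/2$ (finitely supported restrictions are norming for $X^*$). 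Hence \emph{some} $y_m$ lies in the slice while simultaneously agreeing with $g$ at the points $w_{i,m}$. This weak-convergence step is exactly what resolves the tension you correctly identify between being large where $g$ is large and lying in the slice; without it, or a substitute for it, your sketch does not close.
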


\begin{proof}
Take two elements $x \in S_X$ and $x^* \in S_{X^*}$ and
$\varepsilon >0$. There is a finite set $F_1 \subset I$ such that
$$
(1 -\varepsilon)^{1/2p} \|x\| \le    \Bigl(\sum_{i \in F_1} \|x|_{i}\|^p
\Bigr)^{1/p}.
$$

On the other hand, 
the set $\{\rest xF\dopu  x \in B_X, \ F \subset I$ finite$\}$
is clearly dense in $B_X$ and hence norming for~$X^*$.
Consequently, 
 there is a finite set $F_2 \subset I$ and an
element $z \in S_X$ such that
$$
\Bigl(1 -\frac{\varepsilon}{2}\Bigr)\|x^*\| \le \langle \rest z{{F_2}} , x^*
\rangle.
$$
Take $F= F_1 \cup F_2$ and $N=|F|$. As in the proof of Lemma~2.4
in \cite{gue}, for each $i \in F$, (iv) in Definition~\ref{prep}
provides a sequence of different points $(w_{i,n})_{n=1}^\infty$
in $K_i$ such that for each $n$,
$$
\|\rest xi(w_{i,n})\| > (1- \varepsilon)^{1/2p} \|x|_i\|.
$$
An application of Uryson's lemma in each $K_i$ provides a sequence
of normalised
disjointly supported functions $(f_{i,n})_{n=1}^\infty$ such
that $f_{i,n}(w_{i,n})=1$ for every $n$. For every $i \in F$, the
sequence $(f_{i,n})_n$ converges pointwise to $0$, and consequently it
converges weakly to $0$ in $C(K_i)$. Therefore, the function
$(f_{i,n})\dopu  \prod_{i \in F} K_i \to \mathbb{R}^N$ defined in each
coordinate as $f_{i,n}(w)$ converges weakly to $0$ in $\bigoplus_{i
\in F}^\infty C(K_i)$.

As a consequence of the requirements in Definition~\ref{prep}, for
each $v \in X$, the linear map $J\dopu \bigoplus_{i \in F}^\infty C(K_i) \to X$
given by $J((\varphi_i))= \sum_{i \in F} \varphi_i \rest vi$ is
well-defined and continuous, and so weak-to-weak continuous. This
implies that the sequence $(y_n)_{n=1}^\infty$, where
$$
y_n:=\sum_{i \in F} f_{i,n} \rest xi + \sum_{i \in F_2} (1- f_{i,n})
\rest zi
$$
(note that only $F_2$ appears in the second sum) converges weakly
to $\rest z{F_2}$ in~$X$. Notice that all these elements have norm
 ${\le1}$. Therefore there is an index $m$ such that
$\langle y_m, x^* \rangle > 1- \varepsilon$. Take $y:=y_m$.
Finally, note that
\bea
\||x^p+ y^p|^{1/p} \| 
&\ge&
\Bigl( \sum_{i \in F} \||x|_i^p+ y|^p_i|^{1/p} \|^p \Bigr)^{1/p} 
\allowdisplaybreaks \\
&\ge& 
\Bigl( \sum_{i \in F} \||x(w_{i,m})^p+ y(w_{i,m})^p|^{1/p} \|^p
\Bigr)^{1/p} 
\allowdisplaybreaks \\
&=& 
2^{1/p} \Bigl( \sum_{i \in F} \|x(w_{i,m})\|^p \Bigr)^{1/p} 
\allowdisplaybreaks \\
&\ge& 
2^{1/p}(1- {\varepsilon})^{1/2p} \Bigl( \sum_{i \in F} \|x|_i\|^p
\Bigr)^{1/p} 
\allowdisplaybreaks \\
&\ge& 
2^{1/p} (1- \varepsilon)^{1/p}.
\eea
This completes the proof.
\end{proof}

\begin{cor}
The space $\bigoplus_{i\in I}^p L^\infty(\mu_i)$ has the $ p$-\DP.
\end{cor}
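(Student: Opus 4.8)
The plan is to deduce the corollary directly from Proposition~\ref{prop2.8} by verifying that $\bigoplus_{i\in I}^p L^\infty(\mu_i)$ is $(p,\mathcal{K})$-representable in the sense of Definition~\ref{prep}. Most of the work has in fact already been done in Remark~(4), which identifies this $\ell^p$-sum as the chief example of such a space, so the corollary is essentially an announcement of that observation; I would nevertheless spell out the four requirements of Definition~\ref{prep} to make the deduction complete.

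First I would fix the representing data. By the cited lattice isometry \cite[p.~104]{Sch}, each $L^\infty(\mu_i)$ is lattice isometrically isomorphic to a $C(K_i)$ for a suitable compact Hausdorff space $K_i$, and I would take $\mathcal{K}=\{K_i\dopu i\in I\}$ together with the family of scalar spaces $X_k=\R$ (equivalently $C(K_i)$ itself as the relevant representing space over $K_i$). With this identification $\bigoplus_{i\in I}^p L^\infty(\mu_i)=\bigoplus_{i\in I}^p C(K_i)$, and the coordinatewise/pointwise operations are exactly those required in Definition~\ref{prep}. Properties (i) and (ii) are then immediate: the pointwise expression $(\sum_{\nu=1}^n x_\nu^p)^{1/p}$ stays in each $C(K_i)$, hence in the $\ell^p$-sum, and multiplication by an element of $\bigoplus_{i\in I}^\infty C(K_i)$ preserves membership because it acts as a bounded multiplier on each coordinate. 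Property (iii) is simply the definition of the $\ell^p$-sum norm: since $\|\rest xi\|=\|x|_i\|_{C(K_i)}=\sup_{k\in K_i}\|x(k)\|$, taking the supremum over finite $F\subset I$ and points $k_j\in K_j$ of $(\sum_{j\in F}\|x(k_j)\|^p)^{1/p}$ recovers $(\sum_{i\in I}\|x|_i\|_\infty^p)^{1/p}=\|x\|$.

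The only point needing genuine attention is property (iv), and this is where the nonatomicity of the $\mu_i$ enters. Because $\mu_i$ is a positive nonatomic measure, the Gelfand space $K_i$ of $L^\infty(\mu_i)$ is perfect, i.e.\ has no isolated points, as noted in Remark~(4). I would argue that for a perfect $K_i$, any $x\in X$ and any $\eps>0$, the set $\{k\in K_i\dopu \|x(k)\|\ge(1-\eps)\|\rest xi\|\}$ is infinite: by definition of the sup norm this set is nonempty, it is relatively open in $K_i$ (the map $k\mapsto\|x(k)\|$ is continuous since $x|_i\in C(K_i)$ and here $\|x(k)\|=|x(k)|$), and a nonempty open subset of a perfect compact Hausdorff space cannot be finite, since finite sets in a $T_1$ space are closed with isolated points. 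Having verified (i)--(iv), Remark~(3) gives that $X$ is a constant~$1$ $p$-convex g.f.s., and Proposition~\ref{prop2.8} applies to yield the $p$-Daugavet property. The main obstacle is thus confined to property (iv), and it is resolved cleanly by the perfectness of $K_i$; all remaining steps are bookkeeping about the $\ell^p$-sum norm and the coordinatewise structure.
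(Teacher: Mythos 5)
Your proof is correct and takes exactly the paper's route: the paper obtains this corollary by combining Proposition~\ref{prop2.8} with Remark~(4), which records that $\bigoplus_{i\in I}^p L^\infty(\mu_i)$ is $(p,\mathcal{K})$-representable via the lattice isometry $L^\infty(\mu_i)\cong C(K_i)$ with $K_i$ perfect when $\mu_i$ is nonatomic, and your write-up simply verifies conditions (i)--(iv) in detail. One cosmetic slip: the set in (iv), being defined with $\ge$, is closed rather than relatively open, but it contains the nonempty open set $\{k\in K_i\dopu \|x(k)\|>(1-\eps)\|\rest xi\|\}$, so your perfectness argument still shows it is infinite.
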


\begin{remark}
A direct consequence of the definition of
$(p,\mathcal{K})$-representable spaces is that every representable
space in the sense of 
\cite[Definition~2.3]{gue}, 
which is represented on a compact set
$K$ and over a product 
$\prod_{k \in K} X_k$ of constant~$1$ $p$-convex Banach function
spaces or $C(K)$-spaces and for which the requirements in (i) of
Definition~\ref{prep} are fulfilled, is in fact
$(p,\mathcal{K})$-representable; it is enough to consider a one point
set $I$.
\end{remark}

\section{The $p$-Daugavet equation for weakly compact operators}

The proofs of the following results hold by adapting the
techniques used in the ones for the case of spaces with the
Daugavet property, so we only sketch the parts that are different.
For the \DP, Theorem~\ref{reflexive} was first proved in
\cite{ams2000} and Proposition~\ref{uncbasis} in \cite{kadets}. 

\begin{theorem} \label{reflexive}
Let $X$ be a constant~$1$ $p$-convex g.f.s.\  and with the
$p$-Daugavet property. Then every weakly compact operator
satisfies the $p$-Daugavet equation. Consequently, $X$ cannot be
reflexive.
\end{theorem}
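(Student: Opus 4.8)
The goal is to upgrade the $p$-Daugavet equation from rank-one operators (guaranteed by the hypothesis, and characterized geometrically in the Corollary) to all weakly compact operators, and then to deduce non-reflexivity. I would follow the two-step blueprint familiar from the theory of the Daugavet property: first establish the equation for rank-one operators (already in hand), then bootstrap to finite-rank and finally to weakly compact operators via a slicing/approximation argument, and finally observe that the identity $\Id\dopu X\to X$ is weakly compact precisely when $X$ is reflexive, forcing a contradiction.

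\textbf{Step 1 (geometric restatement).}
First I would record the working form of the hypothesis supplied by the Corollary: for every $\varepsilon>0$, every $g\in S_X$ and every $g^*\in S_{X^*}$ there is $h\in S(g^*,\varepsilon)$ with $\||g^p+h^p|^{1/p}\|^p\ge 2-2\varepsilon$. The crucial feature, as in the classical theory, is that this $h$ can be found inside \emph{every} slice $S(g^*,\varepsilon)$; this is what lets one absorb perturbations. I would also note the general upper bound $\sup_{f\in B_X}\||f^p+T(f)^p|^{1/p}\|^p\le 1+\|T\|^p$, valid for any operator by constant-$1$ $p$-convexity, so only the lower bound needs proof throughout.

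\textbf{Step 2 (from rank one to weakly compact).}
The heart of the matter is to show $\sup_{f\in B_X}\||f^p+T(f)^p|^{1/p}\|^p\ge 1+\|T\|^p$ for weakly compact $T$. The standard route is: given $\varepsilon>0$, pick $g^*\in S_{X^*}$ nearly norming a chosen direction, and use the slice-based property to produce test elements $h$ on which $f^p$ and $T(f)^p$ add constructively. The key device in the $\DP$ setting is that weakly compact operators map bounded sets into relatively weakly compact — hence, after slicing, into \emph{small-diameter} or \emph{small-oscillation} — pieces, so that $T$ is nearly constant on a suitable slice; this lets one replace $T$ by a rank-one operator up to $\varepsilon$ on that slice and invoke Proposition~\ref{forone} (in its geometric form). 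Concretely, I expect one chooses $g^*$ so that $\|T^*g^*\|$ is within $\varepsilon$ of $\|T\|$, finds $h$ in the slice $S(g^*,\varepsilon)$ from the Corollary with $g:=T^*g^*/\|T^*g^*\|$, and then estimates $\||h^p+T(h)^p|^{1/p}\|$ from below by splitting off a rank-one part, exactly mimicking the converse computation already carried out in the proof of Proposition~\ref{forone}.

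\textbf{Step 3 (non-reflexivity) and the main obstacle.}
Once the $p$-Daugavet equation holds for weakly compact $T$, apply it to $T=\Id$: if $X$ were reflexive, the identity would be weakly compact, giving $\sup_{f\in B_X}\||f^p+f^p|^{1/p}\|=(1+1)^{1/p}=2^{1/p}$, i.e.\ $\sup_{f\in B_X}\|2^{1/p}|f|\|=2^{1/p}$, which is automatic and yields no contradiction directly — so the contradiction must instead come from comparing the equation for $\Id$ against an independent upper bound available only under reflexivity. The clean way, as in \cite{ams2000}, is to argue that the $p$-Daugavet property forces the unit ball to have slices of diameter close to $2$ in a suitable $p$-sense, which is incompatible with the Radon–Nikod\'ym-type or weak-compactness structure of a reflexive space; thus no rank-one slice can be simultaneously small, contradicting weak compactness of $\Id$. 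The main obstacle I anticipate is precisely controlling the $p$-twisted quantity $\||f^p+T(f)^p|^{1/p}\|$ under the weak limit: since $i_p$ is nonlinear, weak convergence of $(f_n)$ does not transfer to $(f_n^p)$, so the approximation argument must be carried out at the level of the slices and the norming functionals rather than by passing to weak limits of the transformed elements. I would handle this by keeping all estimates on the linear side — working with $\langle h,g^*\rangle$ and $T^*g^*$ — and only applying $i_p$ pointwise at the final norm estimate, as in the proof of Proposition~\ref{forone}.
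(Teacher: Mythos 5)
Your overall architecture is the right one (localize a weakly compact $T$ on a slice, reduce to the rank-one/geometric characterization, then perturb), and it matches the paper's strategy in outline, but two of your steps contain genuine gaps. In Step~2 your concrete slice selection fails: the element $g:=T^*g^*/\|T^*g^*\|$ lives in $X^*$, not in $X$, so it cannot play the role of $g$ in the Corollary; and, more substantially, choosing $g^*$ with $\|T^*g^*\|$ near $\|T\|$ does \emph{not} make $T$ nearly constant on $S(g^*,\varepsilon)$ --- you invoke ``small oscillation after slicing'' without supplying the mechanism. The paper's mechanism is that $K=\overline{T(B_X)}$, being weakly compact, is the closed convex hull of its \emph{strongly exposed} points; one picks a strongly exposed $f_0\in K$ with $1-\varepsilon/2<\|f_0\|\le 1$, and the exposing functional yields a slice $S$ with $T(S)\subset B_\varepsilon(f_0)$. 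Even granting such a slice, you then hit the very obstacle you flagged but did not resolve: for a general weakly compact $T$, the element $T(h)$ is only norm-close to $g$, not a scalar multiple of it as in Proposition~\ref{forone}, so the identity $\||T(h)^p-g^p|^{1/p}\|^p=(1-\langle h,g^*\rangle^p)\|g\|^p$ used there is unavailable, and ``keeping the estimates on the linear side'' does not bound the nonlinear error term. The paper closes this with a quantitative estimate (Lemma~2.4 of \cite{EnrDir2}),
$$
\| |g^p-T(h)^p|^{1/p} \|^p \le \| g-T(h)\|^p + p(2k(p))^{p/p'} \| g-T(h)\|,
$$
which converts $\|g-T(h)\|\le\varepsilon$ into smallness of the $p$-twisted error; without this (or a substitute) the reduction from weakly compact to rank one does not close.

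In Step~3 you correctly observe that $T=\Id$ gives no contradiction, but the route you then sketch (diameter-$2$ slices versus Radon--Nikod\'ym-type structure) is left unproved and is not obviously available here, since the $p$-twisted condition $\||g^p+h^p|^{1/p}\|^p\ge 2-2\varepsilon$ does not directly produce large-diameter slices. The paper's fix is a one-liner you missed: since $i_p$ is odd, $(-f)^p=-f^p$, so applying the established equation to the operator $-\Id$ (weakly compact if $X$ is reflexive) gives $\sup_{f\in B_X}\||f^p+(-f)^p|^{1/p}\|=0$, whereas the $p$-Daugavet equation demands $(1+\|{-}\Id\|^p)^{1/p}=2^{1/p}$ --- an immediate contradiction. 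As written, both the oscillation-control step and the non-reflexivity step are genuine gaps in your proposal.
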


\begin{proof}
Let us show first the following claim: 
\begin{itemize}
\item[$\bullet$]
\textit{Let $X(\mu)$ be a
constant~$1$ g.f.s. If for every $\varepsilon >0$ there are a
slice $S(g^*, \delta)$ and an element $g \in S_X$ such that
$T(S(g^*,\delta))$ is included in the ball $B_\varepsilon(g)$, then
$T$ satisfies the $p$-Daugavet equation.}
\end{itemize}

In order to see this, note that we can assume that $\|T\|=1$. Take
$\varepsilon>0$, and note that we can also assume that $0<\delta
\le \varepsilon$. By Proposition~\ref{forone} there is an element
$h \in S(g^*, \delta)$ such that 
$\| (h^p+g^p)^{1/p} \|_X \ge 2-2 \varepsilon$; it follows that 
$\|T(h)-g\|_X \le \varepsilon$.
Thus
\bea
\sup_{f \in B_X} \| (f^p+ T(f)^p)^{1/p} \|^p_X 
&\ge& 
\| (h^p + T(h)^p)^{1/p}\|^p_X 
\allowdisplaybreaks\\
&\ge& 
\| (h^p + g^p - (g^p- T(h)^p))^{1/p} \|_X^p 
\allowdisplaybreaks\\
&\ge&
 \| (h^p + g^p)^{1/p}\|^p_X - \|(g^p- T(h)^p)^{1/p} \|_X^p 
\allowdisplaybreaks\\
&\ge&
2-2\eps - \|(g^p- T(h)^p)^{1/p} \|_X^p .
\eea
Now notice that the inequality 
$$
\| |g^p-T(h)^p|^{1/p} \|^p_X \le \| g-T(h)\|^p + p(2k(p))^{p/p'} \| g-T(h)\|,
$$
holds (Lemma~2.4 in \cite{EnrDir2}), 
where $k(p)$ is defined to be  $1$ if $p\ge p'$ and
$k(p)= 2^{(p'/p) - 1}$ if $p<p'$;
it can be proved using the pointwise estimates given 
in \cite[Section~2.2]{libro}, H\"older's inequality for Banach
function spaces and the 
constant~$1$ $p$-convexity of~$X$.
Therefore, we obtain that
\bea
2- 2 \varepsilon - \|g - T(h)\|_X^p 
\ge 2 -2 \varepsilon- \varepsilon^p - p(2k(p))^{p/p'} \varepsilon.
\eea
Since this holds for every $\varepsilon >0$,
we have proved the claim.

For finishing the proof let $\varepsilon >0$ and take into
account that we are assuming that the norm closure
$\overline{T(B_X)}=K$ is a weakly compact set, and therefore it is the
closed convex hull of its strongly exposed points. Thus, there is a
strongly exposed point $f_0 \in 
K$ such that $1- \varepsilon/2 < \|f_0\| \le 1$. Then there is a
slice $S$ such that $T(S) \subset B_\varepsilon(f_0)$ (see the
proof of this for example in 
\cite[Theorem~11.50]{abraali} or \cite{ams2000}). 
Since $B_{\varepsilon/2}(f_0) \subset
B_\varepsilon(f_0/\|f_0\|)$, the claim gives the result. Finally,
note that the space $X$ cannot be reflexive, for  otherwise the
operator $-\Id$ would satisfy the $p$-Daugavet equation.
\end{proof}

A relevant consequence of Theorem~\ref{reflexive} is that no $L^p(\mu)$
space for $1<p<\infty$ satisfies the $p$-Daugavet property; recall
that for $p=1$ and an atomless measure  $\mu$ the space
$L^1(\mu)$ has the Daugavet property (see
\cite[Theorem~3.2]{abraali0} 
or \cite[Example, p.~858]{ams2000}). 
Here is another corollary.

\begin{cor}\label{cor3.2}
Let $1<p<\infty$. If a constant~$1$ $p$-convex  Banach function space has the
$p$-Daugavet property, then every $p$-concave operator satisfies
the $p$-Daugavet equation. 
\end{cor}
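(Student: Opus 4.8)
The plan is to reduce the statement about $p$-concave operators to the weakly compact case already handled in Theorem~\ref{reflexive}. The key observation is that if $X$ is a constant~$1$ $p$-convex Banach function space and $T\dopu X\to X$ is $p$-concave, then the composition $i_p\circ T\circ i_{1/p}$ should behave like a $1$-concave operator on the $p$-convexification $i_p(X)$, and $1$-concave operators on function spaces are weakly compact when the target is not too large. More precisely, on a constant~$1$ $p$-convex Banach function space that is also order continuous, a $p$-concave operator into $X$ factors (by the Maurey--Nikishin--type factorization, or directly because $p$-concavity forces the image to sit inside an $L^p$-type space in a way incompatible with containing $c_0$ or $\ell^\infty$) in such a way that it carries bounded sequences to sequences with a weakly convergent subsequence. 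So first I would record that a $p$-concave operator on such a space is automatically weakly compact.

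Concretely, first I would invoke the structure theory: a constant~$1$ $p$-convex Banach function space with the $p$-Daugavet property is, by Theorem~\ref{reflexive}, non-reflexive, and more to the point, any $p$-concave operator $T$ on it satisfies a factorization through a reflexive (in fact $L^p$-type) space. The standard fact is that a $p$-concave operator from a $p$-convex order continuous Banach lattice is weakly compact: the $p$-concavity inequality
$$
\Bigl(\sum_{i=1}^n \|T(x_i)\|^p\Bigr)^{1/p} \le M_{(p)}(T)\,\Bigl\|\Bigl(\sum_{i=1}^n|x_i|^p\Bigr)^{1/p}\Bigr\|
$$
means that $T$ maps the $p$-convexity structure into an absolutely $p$-summing-like pattern, which rules out an isomorphic copy of $c_0$ in the image and hence, together with $p>1$, forces relative weak compactness of $T(B_X)$. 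This is where I expect the one genuine technical point to lie, and I would cite the relevant lattice-factorization theorem (e.g.\ from Lindenstrauss--Tzafriri or the monograph \cite{libro}) rather than reprove it.

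Once weak compactness of $T$ is in hand, the second step is purely an application: since $X$ has the $p$-Daugavet property and $T$ is weakly compact, Theorem~\ref{reflexive} yields immediately that $T$ satisfies the $p$-Daugavet equation. The proof therefore consists of the implication ``$p$-concave $\Rightarrow$ weakly compact'' followed by the invocation of the theorem already proved; no new geometric work on slices is required, because all of that was encapsulated in the claim inside the proof of Theorem~\ref{reflexive}.

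The hard part will be justifying the weak compactness cleanly in the setting of a general constant~$1$ $p$-convex Banach function space, because the factorization results usually assume order continuity of the norm (otherwise $X$ could contain a copy of $\ell^\infty$ on which $p$-concavity of the identity already fails, so one must check that the hypotheses of the corollary exclude such pathologies, or restrict to the order continuous part). I would handle this by noting that the $p$-Daugavet property together with $1<p<\infty$ already prevents $X$ from containing a lattice copy of $\ell^\infty$, so that the needed order-continuity/weak-compactness mechanism applies, and then the rest follows formally from Theorem~\ref{reflexive}.
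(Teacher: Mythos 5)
Your core route is exactly the paper's: the paper proves this corollary in one line by citing \cite[Corollary~1.d.12]{lint} to factor the $p$-concave operator through an $L^p(\mu)$ space, which is reflexive since $1<p<\infty$, so the operator is weakly compact, and then Theorem~\ref{reflexive} applies. You identified both halves of this (factorization through an $L^p$-type space, then invocation of Theorem~\ref{reflexive}), so in structure the proposal matches the published proof.

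However, your final paragraph introduces a genuine error in the course of patching a gap that does not exist. The Maurey--Krivine factorization in \cite[Corollary~1.d.12]{lint} holds for a $p$-concave operator on an \emph{arbitrary} Banach lattice; no order continuity of $X$ is needed, because the hypothesis is $p$-concavity of the operator $T$, not of the identity, so the presence of a lattice copy of $\ell^\infty$ in $X$ is irrelevant (it only obstructs $p$-concavity of $\Id$, which nobody is assuming). Worse, the claim you use to dismiss this non-issue --- that the $p$-Daugavet property with $1<p<\infty$ prevents $X$ from containing a lattice copy of $\ell^\infty$ --- is false: by the Corollary following Proposition~\ref{prop2.8} (take $I$ a singleton), $L^\infty(\mu)$ over an atomless $\mu$ is a constant~$1$ $p$-convex Banach function space with the $p$-Daugavet property, and it certainly contains $\ell^\infty$ as a sublattice. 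The intermediate heuristics (ruling out $c_0$ in the image, ``absolutely $p$-summing-like pattern'') are likewise not the actual mechanism; weak compactness comes purely from reflexivity of the factor space $L^p$, $1<p<\infty$. The false step is removable: delete the last paragraph, state the factorization theorem in its correct generality, and your proof coincides with the paper's.
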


\begin{proof}
Each such 
operator factorizes through an $L^p$ space (see
\cite[Corollary~1.d.12]{lint}), 
and so it is weakly compact. 
\end{proof}

Related arguments can
also be used for proving that there are no ideals in such spaces
being isomorphic to $L^p$ spaces.  

\begin{cor}
In a constant~$1$ $p$-convex Banach
function space with the
$p$-Daugavet property, $p>1$, there are no $p$-concave  band
projections. 
\end{cor}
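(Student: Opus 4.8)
The plan is to derive a contradiction from the existence of a $p$-concave band projection $P$ on a constant~$1$ $p$-convex Banach function space $X$ with the $p$-Daugavet property. The key observation is that a band projection is an idempotent whose complementary band projection $P^c = \Id - P$ is also a band projection, and the two act on complementary bands that are lattice-orthogonal; this disjointness is what I would exploit against the $p$-convexity structure. Since $P$ is $p$-concave, Corollary~\ref{cor3.2} tells us immediately that $P$ satisfies the $p$-Daugavet equation, i.e.
$$
\sup_{f \in B_X} \| (f^p + P(f)^p)^{1/p} \|_X = (1 + \|P\|^p)^{1/p},
$$
and a band projection has norm $\|P\| = 1$ (unless $P=0$), so the right-hand side is $2^{1/p}$.

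First I would record that because $P$ is a band projection, for every $f \in X$ the elements $P(f)$ and $P^c(f)$ are disjointly supported, and $f = P(f) + P^c(f)$ with $|f|^p = |P(f)|^p + |P^c(f)|^p$ pointwise. The same disjointness gives the pointwise identity $f^p + P(f)^p = P^c(f)^p + 2\,P(f)^p$ on the respective bands, so that by the lattice norm and constant~$1$ $p$-convexity
$$
\| (f^p + P(f)^p)^{1/p} \|_X^p = \| P^c(f) \|_X^p + 2\,\| P(f) \|_X^p \le \| P^c(f)\|_X^p + 2 \|P(f)\|_X^p.
$$
Using $\|P^c(f)\|^p + \|P(f)\|^p = \| (|P^c(f)|^p + |P(f)|^p)^{1/p}\|^p = \|f\|^p \le 1$ for $f \in B_X$, the supremum on the left is bounded by $1 + \sup_{f\in B_X}\|P(f)\|^p = 1 + \|P\|^p = 2$, which is consistent; the point is that the supremum $2^{1/p}$ can only be approached when $\|P^c(f)\|$ is forced towards $0$ while $\|P(f)\|$ tends to $1$, i.e. along elements essentially supported in the range band of $P$.

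The main obstacle, and the crux of the argument, will be to convert this ``concentration on the range band'' into a contradiction with $p$-concavity. Here I would use that $P$ factoring as a $p$-concave operator forces its range, viewed through the factorization in Corollary~\ref{cor3.2}, to behave like an $L^p$ ideal; but Theorem~\ref{reflexive} already rules out the relevant reflexive or $L^p$-type structure appearing as a band. Concretely, the restriction of $X$ to the range band of $P$ would itself be a constant~$1$ $p$-convex Banach function space inheriting the $p$-Daugavet property, on which $\Id$ (the restriction of $P$) is $p$-concave, hence weakly compact by the $L^p$-factorization, hence that band is reflexive — contradicting Theorem~\ref{reflexive}, which asserts no such space is reflexive, unless the band is trivial. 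I expect the delicate step to be verifying that the range band, with the inherited norm, is genuinely a constant~$1$ $p$-convex g.f.s.\ with the $p$-Daugavet property so that Theorem~\ref{reflexive} applies to it, rather than merely a closed sublattice; once that is in place the contradiction is immediate and the band projection must be zero.
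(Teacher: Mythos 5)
You have located the crux yourself, and it is precisely where the proof breaks: everything hinges on the unproven claim that the range band $B=P(X)$, with the inherited norm, is a constant~$1$ $p$-convex space \emph{with the $p$-Daugavet property}, so that Theorem~\ref{reflexive} applies to it. Nothing in the paper yields this inheritance, and the natural attempt fails. Given $g\in S_B$ and $g^*\in S_{B^*}$, one would extend the functional to $g^*\circ P\in S_{X^*}$ and use the $p$-Daugavet property of $X$ to find $h\in S(g^*\circ P,\eps)$ with $\||g^p+h^p|^{1/p}\|^p\ge 2-2\eps$; then $Ph$ does lie in the corresponding slice of $B_B$, but $h$ may carry a large disjoint component $P^c h$, and the slice condition only bounds $\|Ph\|$ from below — it puts no upper bound on $\|P^c h\|$. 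So the constant~$1$ $p$-convexity estimate $\||g^p+h^p|^{1/p}\|^p\le\||g^p+(Ph)^p|^{1/p}\|^p+\|P^c h\|^p$ gives only $\||g^p+(Ph)^p|^{1/p}\|^p\ge 1-2\eps$, which is useless; transferring the concentration into $B$ would require a lower $p$-estimate that the hypotheses do not provide. The same missing mechanism invalidates two equalities in your consistency check: both $\|(f^p+P(f)^p)^{1/p}\|^p=\|P^c(f)\|^p+2\|P(f)\|^p$ and $\|P^c(f)\|^p+\|P(f)\|^p=\|f\|^p$ are false in general — constant~$1$ $p$-convexity gives only ``$\le$'', and the reverse fails already for two disjoint norm-one elements of $L^\infty$.

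The idea you missed is a sign trick that makes the corollary a three-line computation: apply Corollary~\ref{cor3.2} to $-P$ rather than to $P$. Since $i_p$ is odd, $(-P(f))^p=-P(f)^p$, and the disjointness of $P(f)$ and $P^c(f)$ gives the pointwise identity
$$
f^p+(-P(f))^p=(P^c(f)+P(f))^p-P(f)^p=P^c(f)^p,
$$
whence
$$
\sup_{f\in B_X}\||f^p+(-P(f))^p|^{1/p}\|=\sup_{f\in B_X}\|P^c(f)\|\le 1<(1+\|P\|^p)^{1/p},
$$
contradicting the $p$-Daugavet equation that the $p$-concave operator $-P$ must satisfy by Corollary~\ref{cor3.2}. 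Your own computation explains why testing $+P$ can never produce a contradiction: any $f\in S_X$ supported in the band gives $\||f^p+P(f)^p|^{1/p}\|=2^{1/p}\|f\|=2^{1/p}$, so $P$ satisfies its own $p$-Daugavet equation trivially. Note finally that the reflexivity statement you were aiming at is exactly what the paper records as a remark right after this corollary, obtained by the very same computation: if the band $B$ were reflexive, $P$ would be weakly compact and Theorem~\ref{reflexive} would apply to $-P$ directly on $X$ — no inheritance of the $p$-Daugavet property by $B$ is ever needed.
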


\begin{proof}
Assume that $Q\dopu X \to X$ is such a
projection; there is another disjoint projection $P$ such that
$\Id= Q+P$. Then
\bea
\sup_{f \in B_X}\|(f^p-Q(f)^p)^{1/p}\|
&=&
\sup_{f \in B_X}\|((P(f)+Q(f))^p-Q(f)^p)^{1/p}\| \\
&=& 
\sup_{f \in B_X} \|(P(f)^p+Q(f)^p-Q(f)^p)^{1/p}\| \\
&=& 
\sup_{f \in B_X} \| P(f)\| \le 1 < (1+ \|Q\|^p)^{1/p}
\eea
which contradicts Corollary~\ref{cor3.2},
since $-Q$ is $p$-concave.
\end{proof}

Moreover, the same computation gives that in a constant~$1$ $p$-convex
Banach function space with the $p$-Daugavet property there are no
projection bands 
being isomorphic to $L^p$ spaces or to any reflexive Banach
space. Finally, note that if $X$ is order continuous, then every ideal
is the range of a positive contractive projection, so there are no
ideals isomorphic  to $L^p$ spaces as Banach spaces  (see
\cite[Proposition~1.a.11]{lint}).

Let us finish the paper with a suitable version of the non-existence of
unconditional bases for Banach spaces 
with the $p$-Daugavet property. 
We recall that no Banach space with the \DP\ has an unconditional
basis \cite{kadets}
and does not even embed into a space with an unconditional basis 
\cite{ams2000}.
Let $X$ be a g.f.s.\ and consider an
unconditional basis $\mathbf B:=\{e_n\dopu n \in \mathbb N\}$ with
projections $P_A$, where $A$ is a finite subset of $\mathbb N$. Write
$Q_A$ for the complementary projection $\Id-P_A$.  
We say that  $\mathbf B$ satisfies a lower $p$-estimate if there is a
constant $k>0$ such that  
$$
\|Q_A\| \ge \sup_{f \in B_{X}} \| (k f^p - P_A(f)^p)^{1/p} \|.
$$
Notice that every unconditional basis satisfies a lower $1$-estimate
with the constant $k=1$. (Indeed, this is so for every Schauder
basis.) 
Also, if the natural lattice structure associated to an unconditional
basis is considered and $X$ becomes a g.f.s.\ over the 
counting measure, the basis satisfies a lower $p$-estimate for every
$1 \le p < \infty$, also with the constant $k=1$. None of these cases
can occur if $X$ is a Banach space with the Daugavet property or a
constant~$1$ $p$-convex g.f.s.\ with the $p$-Daugavet property,
respectively. 
The following result generalizes these examples. 

\begin{proposition}\label{uncbasis}
Let $X$ be a constant~$1$ $p$-convex g.f.s.\  that has the
$p$-Daugavet property. Then $X$ does not have an  unconditional basis
with a lower $p$-estimate.  
\end{proposition}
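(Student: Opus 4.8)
The plan is to argue by contradiction, mimicking the classical proof that no space with the Daugavet property admits an unconditional basis. So suppose $X$ has the $p$-Daugavet property and yet possesses an unconditional basis $\mathbf B=\{e_n\}$ satisfying a lower $p$-estimate with constant $k>0$, i.e.\ $\|Q_A\|\ge\sup_{f\in B_X}\|(kf^p-P_A(f)^p)^{1/p}\|$ for every finite $A\subset\mathbb N$. The idea is that unconditionality forces the finite-rank projections $P_A$ to be ``small'' in a suitable sense, while the $p$-Daugavet property forces any operator that is norm-close to a strongly exposing configuration to satisfy the $p$-Daugavet equation, and these two pulls are incompatible.

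The key technical engine is the claim proved inside Theorem~\ref{reflexive}: if there is a slice $S(g^*,\delta)$ and a point $g\in S_X$ with $T(S(g^*,\delta))\subset B_\eps(g)$, then $T$ satisfies the $p$-Daugavet equation. First I would fix a large finite $A$ and study the projection $P_A$, aiming to show that $P_A$ (or a suitable normalization of it) must satisfy the $p$-Daugavet equation. The mechanism, as in \cite{ams2000,kadets}, is that the complementary projection $Q_A=\Id-P_A$ has a ``small image under slices'' behaviour: because $\mathbf B$ is a basis, for a fixed element one can find slices on which the tail $Q_A$ contributes almost nothing, so that $P_A$ behaves like the identity on that slice up to $\eps$. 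Concretely, I would produce a slice $S(g^*,\delta)$ on which $Q_A(f)$ is uniformly small, hence $P_A(f)=f-Q_A(f)$ stays within $B_\eps(g)$ for an appropriate $g\in S_X$; the claim then yields
$$
\sup_{f\in B_X}\|(f^p+P_A(f)^p)^{1/p}\|=(1+\|P_A\|^p)^{1/p}.
$$
Applying the claim to $-P_A$ in the same way gives the analogous equation with a minus sign, producing $\sup_{f\in B_X}\|(f^p-P_A(f)^p)^{1/p}\|=(1+\|P_A\|^p)^{1/p}$, which is the quantity the lower $p$-estimate controls.

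Then I would extract the contradiction from the lower $p$-estimate. Writing $f=P_A(f)+Q_A(f)$ and using that the basis is unconditional (so $\|P_A\|$ and $\|Q_A\|$ are uniformly bounded by the unconditional constant), the $p$-Daugavet equation for $-P_A$ forces $\|Q_A\|$, via the estimate $\|Q_A\|\ge\sup_{f\in B_X}\|(kf^p-P_A(f)^p)^{1/p}\|$, to be at least something of the order $(k+\|P_A\|^p-\text{(error)})^{1/p}$. As $A$ grows so that $\|P_A\|\to$ (a suitable limit) while $\|Q_A\|$ stays bounded by the unconditional constant, the two requirements collide: the $p$-Daugavet equation inflates the left-hand quantity beyond what the uniformly bounded $\|Q_A\|$ can accommodate. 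Taking $\eps\to0$ and choosing $A$ appropriately turns this into a strict numerical contradiction, exactly parallel to the Daugavet-property case where one shows $\|\Id-2P_A\|$ cannot stay bounded.

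The main obstacle I expect is the first step: verifying that there genuinely is a slice on which the tail projection $Q_A$ is uniformly small so that the claim from Theorem~\ref{reflexive} applies to $P_A$. In the Daugavet setting this rests on the fact that for a normalized basis vector far out in the basis the partial-sum projections fix it, together with the weak/slice geometry of the unit ball; here one must check that the $p$-convex modification does not disturb the argument and that the element $g\in S_X$ and functional $g^*\in S_{X^*}$ can be chosen compatibly with the slice. Once the claim is shown to apply, the remaining algebra is the same sort of bookkeeping as in the corollaries above (e.g.\ the band-projection corollary), so I anticipate no further difficulty beyond tracking the $\eps$'s and the constant $k$.
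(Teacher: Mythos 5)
Your skeleton---apply the $p$-Daugavet equation to $-P_A$ and play it against the lower $p$-estimate and the uniform bound on the basis projections---is indeed the paper's skeleton, and your endgame matches the paper's (it sets $V=\sup_A\|P_A\|$, $W=\sup_A\|Q_A\|$, notes $W\le V$, and derives $W\ge (k'+V^p)^{1/p}>V\ge W$ with $k'>0$ coming from the lower-estimate constant via Lemma~\ref{homoge}, contradicting $V<\infty$). But the one step you actually undertake to prove is wrong as you propose it. You want to verify the hypothesis of the claim inside Theorem~\ref{reflexive} for $T=P_A$ by finding a slice $S(g^*,\delta)$ on which the tail $Q_A$ is uniformly small, so that ``$P_A$ behaves like the identity up to $\varepsilon$'' and $P_A(S(g^*,\delta))\subset B_\varepsilon(g)$. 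These two requirements together force every $f$ in the slice to satisfy $\|f-g\|\le \|P_A(f)-g\|+\|Q_A(f)\|\le 2\varepsilon$, i.e.\ the slice would have diameter at most $4\varepsilon$. That is impossible in precisely the spaces under consideration: by the geometric characterization of the $p$-Daugavet property (the corollary following Proposition~\ref{forone}), every slice of $B_X$ contains, for any prescribed $u\in S_X$, an element $h$ with $\||u^p+h^p|^{1/p}\|^p\ge 2-2\varepsilon$; taking $u=-f_0$ for some $f_0$ in the slice and using the inequality $\||f_0^p-h^p|^{1/p}\|^p\le \|f_0-h\|^p+p(2k(p))^{p/p'}\|f_0-h\|$ quoted in the proof of Theorem~\ref{reflexive}, such an $h$ cannot be close to $f_0$, so all slices have diameter bounded below by a universal constant. (This is the $p$-analogue of the fact that in a Daugavet space every slice has diameter $2$.) So the step you flagged as the ``main obstacle'' is not an obstacle to be overcome: the statement underlying it is false, and no slice construction of this kind can exist.

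The repair is that no such construction is needed. $P_A$ has finite rank, hence is weakly compact, and Theorem~\ref{reflexive} already asserts that every weakly compact operator on $X$ satisfies the $p$-Daugavet equation; in its proof the slice with $T(S)\subset B_\varepsilon(f_0)$ is manufactured from a strongly exposed point $f_0$ of the weakly compact set $\overline{T(B_X)}$ --- the small ball sits around a point of the \emph{image}, with no claim that the slice itself is small or that any tail is small on it. This citation is exactly the paper's proof of Proposition~\ref{uncbasis}: $-P_A$ satisfies the $p$-Daugavet equation by Theorem~\ref{reflexive}, which combined with the lower $p$-estimate (and Lemma~\ref{homoge} to absorb the constant $k$) yields $\|Q_A\|\ge (k'+\|P_A\|^p)^{1/p}$ for every finite $A$, whence the $V$--$W$ contradiction. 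When rewriting, also make the comparison $W\le V$ explicit: the mere fact that $\|Q_A\|$ is bounded by the unconditional constant does not by itself collide with $\|Q_A\|\ge (k'+\|P_A\|^p)^{1/p}$; you need $\|Q_A\|\le V$, which holds because $Q_A f$ is the limit of $P_B f$ over finite sets $B\subset \mathbb{N}\setminus A$.
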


\begin{proof}
Let $\mathbf B:=\{e_n\dopu n \in \mathbb N\}$ be an unconditional basis
for $X$ with a lower $p$-estimate. For every finite 
subset $A \subset \mathbb{N}$ denote by $P_A$ and $Q_A$ the
corresponding projections on the subspaces generated by the
elements of the basis with subscripts in $A$ and $\mathbb{N}
\setminus A$, respectively. Since $\Id= P_A + Q_A$, $\mathbf B$ has a lower $p$-estimate and by Theorem~\ref{reflexive} $-P_A$ satisfies the
$p$-Daugavet equation, there is a constant $k >0 $ such that
$$
\|Q_A \| \ge \sup_{f \in B_X} \| |k f^p + (-P_A(f))^p|^{1/p} \|_X =
(k^{1/p} + \|P_A\|^p)^{1/p}.
$$
If we define $V = \sup \{ \|P_A\|\dopu  A$ finite$\}$ and
$W = \sup \{ \|Q_A\|\dopu  A $ finite$\}$, clearly $W \le V$.
Since by the inequalities above $W \ge (k^{1/p}+ V^p)^{1/p}$, we obtain
that $V=W= \infty$, a contradiction with the fact that $\mathbf B$
is an unconditional basis.
\end{proof}

\end{document}